\newtheorem{theorem}{Theorem}[section]
\newtheorem{lemma}[theorem]{Lemma}
\newtheorem{prop}[theorem]{Proposition}
\theoremstyle{definition}
\newtheorem{defn}[theorem]{Definition}
\newtheorem{exam}[theorem]{Example}
\newcommand{\nc}{\newcommand}
\newcommand{\delete}[1]{}
\nc{\tred}[1]{\textcolor{red}{#1}} \nc{\tblue}[1]{\textcolor{blue}{#1}} \nc{\tgreen}[1]{\textcolor{green}{#1}} \nc{\tpurple}[1]{\textcolor{purple}{#1}} \nc{\btred}[1]{\textcolor{red}{\bf #1}} \nc{\btblue}[1]{\textcolor{blue}{\bf #1}} \nc{\btgreen}[1]{\textcolor{green}{\bf #1}} \nc{\btpurple}[1]{\textcolor{purple}{\bf #1}}
\renewcommand{\Bbb}{\mathbb}
\newcommand{\efootnote}[1]{}
\renewcommand{\textbf}[1]{}
\nc{\mlabel}[1]{\label{#1}}  
\nc{\mcite}[2][]{\cite[#1]{#2}}  
\nc{\mref}[1]{\ref{#1}}  
\nc{\mbibitem}[1]{\bibitem{#1}} 
\nc{\mlabel}[1]{\label{#1}  
{\hfill \hspace{1cm}{\bf{{\ }\hfill(#1)}}}}
\nc{\mcite}[2][1]{\cite[#1]{#2}{{\bf{{\ }(#2; #1)}}}}  
\nc{\mref}[1]{\ref{#1}{{\bf{{\ }(#1)}}}}  
\nc{\mbibitem}[1]{\bibitem[\bf #1]{#1}} 
\renewcommand\geq{\geqslant}
\renewcommand\leq{\leqslant}
\renewcommand\bar[1]{\overline{#1}}
\nc{\into}{I}
\nc{\rbw}{\mathfrak{R}} \nc{\brp}{\mathrm{brp}} \nc{\lead}{\mathrm{Lead}} \nc{\Id}{\mathrm{Id}} \nc{\Irr}{\mathrm{Irr}}
\nc{\vx}{\sigma} \nc{\vy}{\tau} \nc{\dvx}{\sigma^{(1)}} \nc{\dvy}{\tau^{(1)}} \nc{\done}{\vep} \nc{\mcitep}[1]{\mcite{#1}} \nc{\wt}{\mathrm{wt}} \nc{\bre}[1]{|#1|} \nc{\mapmonoid}{\frakM} \nc{\disjoint}{\frakM'}
\nc{\ncpoly}[1]{\langle #1\rangle}  
\nc{\mapm}[1]{\lfloor\!|{#1}|\!\rfloor}
\nc{\diff}[1]{{}^\NC\{ #1 \}} \nc{\disj}[1]{\{{#1}\}'} \nc{\mdisj}[1]{\frakM'(#1)} \nc{\brho}{\bar{\rho}} \nc{\om}{\bar{\frakm}} \nc{\frakn}{\mathfrak n} \nc{\ddeg}[1]{^{(#1)}} \nc{\opset}{X} \nc{\genset}{{Z}} \nc{\NC}{\mathrm{{NC}}} \nc{\leaf}{\mathrm{leaf}} \nc{\twig}{\mathrm{twig}} \nc{\fe}{\mathrm{fl}} \nc{\munderline}[1]{#1} \nc{\bo}{o} \nc{\dep}{\mathrm{depth}} \nc{\ofe}{\mathrm{ofl}} \nc{\dfe}{\mathrm{dfe}} \nc{\fex}{\mathrm{fex}} \nc{\dl}{\mathrm{dlex}} \nc{\db}{\mathrm{db}} \nc{\lex}{\mathrm{lex}} \nc{\clex}{\mathrm{clex}} \nc{\dgp}{\mathrm{dgp}} \nc{\dgx}{\mathrm{dgx}} \nc{\br}{\mathrm{br}} \nc{\obd}{\mathrm{odb}} \nc{\ob}{\mathrm{ob}}
\nc{\pie}{\mathrm{PIE}}
\nc{\rbo}{\mathrm{RBO}}
\nc{\supp}{\mathcal{S}}
\nc{\nul}{\mathcal{Z}}
\nc{\bin}[2]{ (_{\stackrel{\scs{#1}}{\scs{#2}}})}  
\nc{\binc}[2]{ \left (\!\! \begin{array}{c} \scs{#1}\\
    \scs{#2} \end{array}\!\! \right )}  
\nc{\bincc}[2]{  \left ( {\scs{#1} \atop
    \vspace{-1cm}\scs{#2}} \right )}  
\nc{\bs}{\bar{S}} \nc{\cosum}{\sqsubset} \nc{\la}{\longrightarrow} \nc{\rar}{\rightarrow} \nc{\dar}{\downarrow} \nc{\dprod}{**} \nc{\dap}[1]{\downarrow \rlap{$\scriptstyle{#1}$}} \nc{\md}[1]{\bar{#1}} \nc{\uap}[1]{\uparrow \rlap{$\scriptstyle{#1}$}} \nc{\defeq}{\stackrel{\rm def}{=}} \nc{\disp}[1]{\displaystyle{#1}} \nc{\dotcup}{\ \displaystyle{\bigcup^\bullet}\ } \nc{\gzeta}{\bar{\zeta}} \nc{\hcm}{\ \hat{,}\ } \nc{\hts}{\hat{\otimes}} \nc{\barot}{{\otimes}} \nc{\free}[1]{\bar{#1}} \nc{\uni}[1]{\tilde{#1}} \nc{\hcirc}{\hat{\circ}} \nc{\leng}{\ell} \nc{\lleft}{[} \nc{\lright}{]} \nc{\lc}{\lfloor} \nc{\rc}{\rfloor}
\nc{\lb}{[} 
\nc{\rb}{]} 
\nc{\curlyl}{\left \{ \begin{array}{c} {} \\ {} \end{array}
    \right.  \!\!\!\!\!\!\!}
\nc{\curlyr}{ \!\!\!\!\!\!\!
    \left. \begin{array}{c} {} \\ {} \end{array}
    \right \} }
\nc{\longmid}{\left | \begin{array}{c} {} \\ {} \end{array}
    \right. \!\!\!\!\!\!\!}
\nc{\onetree}{\bullet} \nc{\ora}[1]{\stackrel{#1}{\rar}}
\nc{\ola}[1]{\stackrel{#1}{\la}}
\nc{\ot}{\otimes} \nc{\mot}{{{\boxtimes\,}}} \nc{\otm}{\overline{\boxtimes}} \nc{\sprod}{\bullet} \nc{\scs}[1]{\scriptstyle{#1}} \nc{\mrm}[1]{{\rm #1}} \nc{\msum}{\sum\limits}
\nc{\margin}[1]{\marginpar{\rm #1}}   
\nc{\dirlim}{\displaystyle{\lim_{\longrightarrow}}\,} \nc{\invlim}{\displaystyle{\lim_{\longleftarrow}}\,} \nc{\mvp}{\vspace{0.3cm}} \nc{\tk}{^{(k)}} \nc{\tp}{^\prime} \nc{\ttp}{^{\prime\prime}} \nc{\svp}{\vspace{2cm}} \nc{\vp}{\vspace{8cm}} \nc{\proofbegin}{\noindent{\bf Proof: }}
\nc{\proofend}{$\blacksquare$ \vspace{0.3cm}}
\nc{\modg}[1]{\!<\!\!{#1}\!\!>}
\nc{\intg}[1]{F_C(#1)} \nc{\lmodg}{\!<\!\!} \nc{\rmodg}{\!\!>\!} \nc{\cpi}{\widehat{\Pi}}
\nc{\sha}{{\mbox{\cyr X}}}  
\nc{\shap}{{\mbox{\cyrs X}}} 
\nc{\shpr}{\diamond}    
\nc{\shp}{\ast} \nc{\shplus}{\shpr^+}
\nc{\shprc}{\shpr_c}    
\nc{\msh}{\ast} \nc{\zprod}{m_0} \nc{\oprod}{m_1} \nc{\vep}{\varepsilon} \nc{\labs}{\mid\!} \nc{\rabs}{\!\mid}
\nc{\astarrow}{\overset{\raisebox{-3pt}{$\ast$}}{\rightarrow}}
\nc{\dth}{d} \nc{\mmbox}[1]{\mbox{\ #1\ }} \nc{\fp}{\mrm{FP}} \nc{\rchar}{\mrm{char}} \nc{\Fil}{\mrm{Fil}} \nc{\Mor}{Mor\xspace} \nc{\gmzvs}{gMZV\xspace} \nc{\gmzv}{gMZV\xspace} \nc{\mzv}{MZV\xspace} \nc{\mzvs}{MZVs\xspace} \nc{\Hom}{\mrm{Hom}} \nc{\id}{\mrm{id}} \nc{\im}{\mrm{im}} \nc{\incl}{\mrm{incl}} \nc{\map}{\mrm{Map}} \nc{\mchar}{\rm char} \nc{\nz}{\rm NZ}
\nc{\Alg}{\mathbf{Alg}} \nc{\Bax}{\mathbf{Bax}} \nc{\bff}{\mathbf f} \nc{\bfk}{{\bf k}} \nc{\bfone}{{\bf 1}} \nc{\bfx}{\mathbf x} \nc{\bfy}{\mathbf y}
\nc{\base}[1]{\bfone^{\otimes ({#1}+1)}} 
\nc{\Cat}{\mathbf{Cat}} \delete{}
\nc{\detail}{\marginpar{\bf More detail}
    \noindent{\bf Need more detail!}
    \svp}
\nc{\Int}{\mathbf{Int}} \nc{\Mon}{\mathbf{Mon}}
\nc{\rbtm}{{shuffle }} \nc{\rbto}{{Rota-Baxter }} \nc{\remarks}{\noindent{\bf Remarks: }} \nc{\Rings}{\mathbf{Rings}} \nc{\Sets}{\mathbf{Sets}}
\nc{\BA}{{\Bbb A}} \nc{\CC}{{\Bbb C}} \nc{\DD}{{\Bbb D}} \nc{\EE}{{\Bbb E}} \nc{\FF}{{\Bbb F}} \nc{\GG}{{\Bbb G}} \nc{\HH}{{\Bbb H}} \nc{\LL}{{\Bbb L}} \nc{\NN}{{\Bbb N}} \nc{\KK}{{\Bbb K}} \nc{\QQ}{{\Bbb Q}} \nc{\RR}{{\Bbb R}} \nc{\TT}{{\Bbb T}} \nc{\VV}{{\Bbb V}} \nc{\ZZ}{{\Bbb Z}}
\nc{\cala}{{\mathcal A}} \nc{\calc}{{\mathcal C}} \nc{\cald}{{\mathcal D}} \nc{\cale}{{\mathcal E}} \nc{\calf}{{\mathcal F}} \nc{\calg}{{\mathcal G}} \nc{\calh}{{\mathcal H}} \nc{\cali}{{\mathcal I}} \nc{\call}{{\mathcal L}} \nc{\calm}{{\mathcal M}} \nc{\caln}{{\mathcal N}} \nc{\calo}{{\mathcal O}} \nc{\calp}{{\mathcal P}} \nc{\calr}{{\mathcal R}} \nc{\cals}{{\mathcal S}} \nc{\calt}{{\mathcal T}} \nc{\calw}{{\mathcal W}} \nc{\calk}{{\mathcal K}} \nc{\calx}{{\mathcal X}}
\nc{\calz}{{\mathcal Z}}
 \nc{\CA}{\mathcal{A}}
\nc{\fraka}{{\mathfrak a}} \nc{\frakA}{{\mathfrak A}} \nc{\frakb}{{\mathfrak b}} \nc{\frakB}{{\mathfrak B}}
\nc{\frakc}{{\mathfrak c}}  \nc{\frakD}{{\mathfrak D}}
\nc{\frakH}{{\mathfrak H}}
\nc{\frakh}{{\mathfrak h}} \nc{\frakM}{{\mathfrak M}}
\nc{\frakO}{{\mathfrak O}}
\nc{\frakE}{{\mathfrak E}}
\nc{\bfrakM}{\overline{\frakM}} \nc{\frakm}{{\mathfrak m}} \nc{\frakP}{{\mathfrak P}} \nc{\frakN}{{\mathfrak N}} \nc{\frakp}{{\mathfrak p}} \nc{\frakS}{{\mathfrak S}}
\nc{\frakk}{{\mathfrak k}}
\nc{\frakx}{{\mathfrak x}}
\nc{\frakl}{{\mathfrak l}} \nc{\ox}{\bar{\frakx}} \nc{\frakX}{{\mathfrak X}} \nc{\fraky}{{\mathfrak y}} \nc\dop{\delta}
\nc{\Reduce}{{\rm Red}}
\font\cyr=wncyr10 \font\cyrs=wncyr7
\nc{\redt}[1]{\textcolor{red}{#1}}
\nc{\ma}[1]{\textcolor{green}{\tt Markus:#1}}
\nc{\li}[1]{\textcolor{red}{\tt Li:#1}} \nc{\sz}[1]{\textcolor{blue}{\tt sz:#1}} \nc{\xg}[1]{\textcolor{purple}{\tt xg:#1}}
\nc{\nonz}[1]{#1^\times}
\nc{\NNP}{\nonz{\NN}}
\nc{\stdint}[1]{J_{#1}}
\nc{\dualmod}[1]{#1^*}
\nc{\alghom}[1]{#1^\bullet}
\nc{\End}{\mathrm{End}}
\nc{\rng}{\mathrm{\mathcal{R}}}
\nc{\codim}{\mathrm{codim}}
\nc{\evl}{\mathrm{ev}}
\nc{\oh}{\,\small{\textcircled{\tiny{$\frakH$}}}\,}
\nc{\ohs}{\small{\textcircled{\tiny{$\frakH$}}}}
\nc{\ohzs}{\,\small{\textcircled{\tiny{$\frakh$}}}\,}
\nc{\ohz}{\small{\textcircled{\tiny{$\frakh$}}}}
\nc{\ug}{\mathfrak{U}_h(\mathfrak{g})}
\nc{\frakg}{\mathfrak{g}}
\nc{\frakU}{\mathfrak U}
\nc{\brag}{[,]_\frakg}
\begin{document}
\title{Free involutive Hom-semigroups and Hom-associative algebras}

\author{Li Guo}
\address{
Department of Mathematics and Computer Science, Rutgers University, Newark, NJ 07102, USA}
\email{liguo@rutgers.edu}

\author{Shanghua Zheng}
\address{Department of Mathematics, Lanzhou University, Lanzhou, Gansu 730000, China}
\email{zheng2712801@163.com}

\hyphenpenalty=8000
\date{\today}

\begin{abstract}
In this paper we construct free Hom-semigroups when its unary operation is multiplicative and is an involution. Our method of construction is by bracketed words. As a consequence, we obtain free Hom-associative algebras generated by a set under the same conditions for the unary operation.
\end{abstract}


\maketitle

\tableofcontents

\hyphenpenalty=8000 \setcounter{section}{0}


\section{Introduction}\mlabel{sec:int}

A Hom-Lie algebra is a generalization of a Lie algebra. It consists of a vector space $L$, a bilinear skew-symmetric bracket $[\cdot, \cdot ]: L\ot L\to L$ and a linear self-map $\alpha:L\to L$ such that the {\bf Hom-Jacobi identity} holds
$$ [\alpha(x),[y,z]]+[\alpha(y),[z,x]]
+[\alpha(z),[x,y]]=0.$$
Thus when $\alpha$ is the identity map, a Hom-Lie algebra is just a Lie algebra. The concept of a Hom-Lie algebra was introduced in~\cite{HLS} to describe the structures on certain deformations of the Witt algebra and the Virasoro algebra. But related constructions could already be found in earlier literature~\cite{AS,Hu,Liu}.

As is well-known, an associative algebra gives a Lie algebra by taking the commutator bracket. To give a similar approach to Hom-Lie algebra, Makhlouf and Silvestrov~\cite{MS2} introduced the concept of a Hom-associative algebra $(A,\mu, \alpha)$ in which the binary operation $\mu$
satisfies an $\alpha$-twisted version of the associativity. Afterwards, Yau~\cite{Yau4} constructed the enveloping Hom-associative algebra of a Hom-Lie algebra. Since then the concepts of various other Hom-structures have been introduced with broad connections in mathematics and mathematical physics~\cite{HMS,MS2}. See~\cite{Mak,MY,MS1,SheBai,Yau1,Yau2,Yau3,Yau4} for further results in this direction.

While it did not pose much difficulty in establishing the concepts of Hom-generalizations for many algebraic structures, their studies often turned out be much more challenging than their classical counter parts. One case in point is the construction of free objects. Even though explicit constructions of many algebraic structures are known, such constructions have been established for very few Hom-algebraic structures. Even the explicit construction of the most classical algebraic structure, namely of free Hom-associative algebras are not known. See~\cite{Yau4} for the construction of free Hom-associative algebras as quotients of free Hom-nonassociative algebras.

Note that the construction of a free algebra on a set essentially comes from the construction of a free semigroup on the set. From this viewpoint, we introduce the concept of a Hom-semigroup in this paper and provide an explicit construction of free objects for a special yet important class of Hom-semigroups, namely the involutive Hom-semigroups. In fact in the literature, the involutive condition is often assumed for a Hom-Lie algebra or Hom-associative algebra~\cite{SheBai}. As a consequence we obtain an explicit construction of free involutive Hom-associative algebras. After giving the concepts and basic examples or Hom-semigroups, we first construct free involutive Hom-semigroups in Theorem~\mref{thm:freeb}. We then obtain the construction of a free involutive Hom-associative algebra on a set by a simple linear span of the free involutive Hom-semigroup. This is given in Theorem~\mref{thm:freex}. Further study of Hom-associative algebras and Hom-Lie algebras will be continued in a future work.

\section{Involutive Hom-semigroups}
\mlabel{sec:inv}

In this section, we introduce the notions of Hom-semigroup and involutive Hom-semigroup, and give some examples.


\begin{defn}
\begin{enumerate}
\item
A {\bf Hom-semigroup} is a set $S$ together with a binary operation $\mu$ (that will often be suppressed from the notation: $\mu(x,y)=xy$) and a unary operation $\alpha$ on $S$ that satisfy the {\bf Hom-associative law}:
$$\alpha(x)(yz)=(xy)\alpha(z),\quad\text{
for all $x,y,z\in S$}.$$
\item
A Hom-semigroup $(S,\mu,\alpha)$ is called {\bf multiplicative} if $\alpha(xy)=\alpha(x)\alpha(y)$
for all $x,y\in S$.
\item
A Hom-semigroup $(S,\mu,\alpha)$ is called {\bf involutive} if it is multiplicative and $\alpha^2=\id$, the identity map on $S$.
\item
Let $S:=(S,\mu,\alpha)$ and $S':=(S',\mu',\alpha')$ be two Hom-semigroups. A set map $f:S\to S'$ is called a {\bf morphism of Hom-semigroups} if $f(\mu(x,y))=\mu'(f(x),f(y))$ and $f(\alpha(x))=\alpha'(f(x))$ for all $x,y\in S$. A morphism of Hom-semigroups $f$ is called an {\bf isomorphism} if $f$ is a bijection.
\end{enumerate}
\end{defn}

By taking $\alpha=\id$ in a Hom-semigroup, we see that every semigroup is not only a Hom-semigroup but also an involutive Hom-semigroup. But a Hom-semigroup  is not necessarily a semigroup in general, as we can see from the following example.
\begin{exam}Let $S=\{x,y,z\,\}$. Define a binary product on $S$ by the following Cayley table:
$$\begin{tabular}{c|ccccc}
$\cdot$ &$x$ & $y$& $z$\\
\hline
$x$ &$y$ & $x$ &$z$\\
$y$ &$y$ & $y$ &$z$\\
$z$ &$z$ & $z$ &$z$\\
\end{tabular}
$$
Further define a set map $\alpha: S\to S$ by
$$\alpha(x)=\alpha(y)=\alpha(z)=z.$$
Then we can check that $(S, \cdot, \alpha)$ is a Hom-semigroup. But since $(xy)x=xx=y$ and $x(yx)=xy=x$, $(S,\cdot)$ is not a semigroup.
\end{exam}

We next  construct a Hom-semigroup from any given semigroup $S$. If a semigroup $S$ with at least two elements contains an element $0$ such that
$$x0=0x=0,\quad\text{for all $x\in S$},$$
then the element $0$ is called a {\bf zero element} of $S$, and that $S$ is a \emph{semigroup with zero}. As can be easily checked, a zero element of a semigroup is unique. If $S$ has no zero element, we can adjoin a new element $0$ to $S$ and define
$$x0=0x=00=0,\quad\text{for all $x\in S$}.$$
Then the associativity still holds in the extended set $S\cup \{0\}$, making it a semigroup with zero.
We define
$$S^0:=\left\{\begin{array}{lll}
S, &\text{if $S$ has a zero element},\\
S\cup \{0\}, &\text{otherwise}.
\end{array}\right.$$
With this notation, we have
\begin{prop}
Let $S$ be a semigroup. Define a set map $\alpha_0: S^0\to S^0$ by $\alpha_0(x):=0$ for all $x\in S^0$. Then $S^0$ with the set map $\alpha_0$ is a Hom-semigroup.
\end{prop}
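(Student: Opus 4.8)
The plan is to verify the two ingredients in the definition of a Hom-semigroup directly for $(S^0,\mu,\alpha_0)$: first that the binary operation $\mu$ on $S^0$ is associative, and second that the Hom-associative law holds with respect to the unary map $\alpha_0$.

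First I would recall, as established in the paragraph preceding the statement, that $S^0$ is indeed a semigroup. If $S$ already possesses a zero element, then $S^0=S$ and there is nothing new to verify. Otherwise $S^0=S\cup\{0\}$ with the extended product $x0=0x=00=0$, and associativity is checked by a short case analysis according to how many of the three arguments equal the adjoined element $0$: the cases in which none of them is $0$ are covered by associativity in $S$, and in every remaining case both triple products collapse to $0$. In either situation $S^0$ has a (necessarily unique) zero element, which I continue to write $0$, satisfying $x0=0x=0$ for all $x\in S^0$.

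Next I would check the Hom-associative law $\alpha_0(x)(yz)=(xy)\alpha_0(z)$ for all $x,y,z\in S^0$. Since $\alpha_0$ is by definition the constant map with value $0$, the left-hand side equals $0\cdot(yz)$ and the right-hand side equals $(xy)\cdot 0$; as $0$ is a zero element of $S^0$, both expressions equal $0$. Hence the Hom-associative law holds and $(S^0,\mu,\alpha_0)$ is a Hom-semigroup.

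I do not expect any real obstacle here: the only point requiring attention is the bookkeeping that $S^0$ genuinely is a semigroup with zero when a new element must be adjoined, and that has already been carried out in the discussion before the proposition. One may additionally observe, though it is not part of the assertion, that $\alpha_0$ is multiplicative, since $\alpha_0(xy)=0=\alpha_0(x)\alpha_0(y)$, but that it fails to be involutive in general because $\alpha_0^2(x)=\alpha_0(0)=0\ne x$ whenever $x\ne 0$.
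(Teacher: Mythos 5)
Your proof is correct and follows essentially the same route as the paper's, which simply observes that $\alpha_0(x)(yz)=0=(xy)\alpha_0(z)$ because $0$ is a zero element of $S^0$. The only quibble is that associativity of the product on $S^0$ is not actually an ingredient of the definition of a Hom-semigroup (the paper even gives an example of a Hom-semigroup that is not a semigroup), so that part of your verification, while true and already established in the discussion preceding the proposition, is not needed for the conclusion.
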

\begin{proof}
We check that $\alpha(x)(yz)=0=(xy)\alpha(z)$ holds for all $x,y,z\in S$. Thus $(S,\alpha)$ is a Hom-semigroup.
\end{proof}

We next give an example of an involutive Hom-semigroup.
\begin{exam}
Let $S=\{x,y,z\,\}$. Define a binary product on $S$ by the following Cayley table.
$$\begin{tabular}{c|ccccc}
$\cdot$ &$x$ & $y$& $z$\\
\hline
$x$ &$y$ & $x$ &$z$\\
$y$ &$y$ & $x$ &$z$\\
$z$ &$z$ & $z$ &$z$\\
\end{tabular}
$$
We further define a set map $\alpha: S\to S$ by taking $$\alpha(x)=y,\,\alpha(y)=x,\,\alpha(z)=z.$$
Then we have $\alpha^2=\id$ and $\alpha(ab)=\alpha(a)\alpha(b)$ for all $a,b\in S$. In order to prove that $(S,\cdot,\alpha)$ is an involutive Hom-semigroup, we only need to verify the Hom-associative law:
$$\alpha(a)(bc)=(ab)\alpha(c)\quad\text
{for all $a,b,c\in S$.}$$
By the above Cayley table and $\alpha(z)=z$, the Hom-associativity holds if one of $a,b,c$ is taken to be $z$. Thus it remains to prove that the Hom-associativity holds for $a,b,c\in \{x,y\}.$ We divide into the following eight  cases to consider.
\begin{eqnarray*}
a=b=c=x;\quad  a=b=x, c=y;\quad
a=c=x, b=y;\quad b=c=x,a=y;\\
a=b=c=y;\quad
a=b=y, c=x;\quad
a=c=y, b=x;\quad
b=c=y,a=x.
\end{eqnarray*}
The verification of the Hom-associativity in each case is simple. For example, taking $a=c=x,b=y$ we have $\alpha(x)(yx)=yy=x$ and $(xy)\alpha(x)=xy=x$.  Then $\alpha(x)(yx)=(xy)\alpha(x).$
\end{exam}

\delete{
Let $S$ be a non-empty set. Let $M_n(S)$ denote the set of  $n\times n$ matrices with entries in $S$.
\begin{exam}\cite{Yau3}
Let $(S,\mu,\alpha)$ be a  Hom-semigroup.  Then $(M_n(S), \mu', \alpha')$ is a  Hom-semigroup, where the multiplication $\mu'$ is given by matrix multiplication and $\alpha'$ is given by $\alpha$ in each entry. Moreover, if $(S,\mu,\alpha)$ is an involutive Hom-semigroup, then $(M_n(S),\mu',\alpha')$ is also an involutive Hom-semigroup.
\end{exam}
}

\section{Free involutive Hom-semigroups}
\mlabel{sec:free}
In this section, we construct the free involutive Hom-semigroup generated by a set.
The construction will be given by bracketed words. We will carry out the construction in Section~\mref{ss:cons} that leads to Theorem~\mref{thm:freeb}, our main result of this paper. We then provide the proof of this theorem in Section~\mref{ssec:proof}.
\subsection{The construction by bracketed words}
\mlabel{ss:cons}

We start with the basis definition of the free involutive Hom-semigroup on a set.
\begin{defn}
A {\bf free involutive Hom-semigroup on a set $X$} is an involutive Hom-semigroup $(F(X),\ast,\alpha)$ with a set map $j_X:X\to F(X)$ such that, for any involutive Hom-semigroup $(S,\cdot,\beta)$ and any set map $f:X\to S$, there is a unique homomorphism $\bar{f}:F(X)\to S$ of  Hom-semigroups such that $\bar{f}\circ  j_X=f$.
\end{defn}

We first start with a construction of the set of the free involutive Hom-semigroup on $X$ by bracketed words\cite{Gop}.
Let $\lc X\rc$ denote the set $\{\lc x\rc\,|\, x\in X\}$.
Thus $\lc X\rc$ is a set that is indexed by $X$ but disjoint with $X$. Also denote $\lc X\rc^{(0)}=X$, $\lc x\rc^{(0)}=x$, $\lc X\rc^{(1)}=\lc X\rc$ and $\lc x\rc^{(1)}=\lc x\rc$ by convention.
We then define the set $$\tilde{X}:=\lc X\rc ^{(0)} \sqcup \lc X\rc= X\sqcup \lc X\rc=\{\lc x\rc^{(k)}\,|\,x\in X, k\in \{0,1\}\}.$$

Define
$$\calh(X):=\bigcup_{n\geq 1}\tilde{X}^n.$$
Thus $\calh(X)$ has the same underlying set as the free semigroup generated by $\tilde{X}$. So any $\frakx\in \tilde{X}^n$ is of the form
$$\frakx=\lc x_1\rc^{(k_1)}\cdots\lc x_{n-1}\rc^{(k_{n-1})}\lc x_{n}\rc^{(k_n)},$$
where $x_i\in X$ and $k_i\in \{0,1\}$ for $i=1,\cdots,n$. But instead of the usual concatenation multiplication for the free semigroup, we will define a different multiplication on $\calh(X)$.

Let $m\in \NN$. Then we denote $\bar{m}:=m\pmod 2$ for the residue of $m$ modulo 2. Define a set map $\alpha_X$ on $\calh(X)$ by defining
$$\alpha_X(\frakx)=\lc x_1\rc^{(\bar{k_1+1})}\cdots\lc x_{n-1}\rc^{(\bar{k_{n-1}+1})}\lc x_n\rc^{(\bar{k_n+1})} \quad
\text{for all } \frakx=\lc x_1\rc^{(k_1)}\cdots\lc x_{n-1}\rc^{(k_{n-1})}\lc x_{n}\rc^{(k_n)} \in \tilde{X}^n.$$
Since $\bar{k_i+1}$ is also in $\{0,1\}$ for $1\leq i\leq n$,  $\alpha_X(\frakx)$ is in $\tilde{X}^n$. Thus the set map $\alpha_X$ is well-defined. For $n=1$, we have

\begin{equation}\alpha_X(\lc x\rc^{(k)})=\lc x\rc^{(\bar{k+1})} \quad\text{for all}\,\, \lc x\rc^{(k)}\in \tilde{X}.
\mlabel{eq:alphx}
\end{equation}
So in particular, $\alpha_X(x)=\lc x \rc$ for all $x\in X$. Then
we obtain
\begin{equation}
\alpha_X(\frakx)=\alpha_X(\lc x_1\rc^{(k_1)})\cdots\alpha_X(\lc x_{n-1}\rc^{(k_{n-1})})\alpha_X(\lc x_{n}\rc^{(k_n)}).
\mlabel{eq:br1}
\end{equation}
By the definition of $\alpha_X$,  we have
\begin{equation}
\alpha_X^2=\id.
\mlabel{eq:bramap}
\end{equation}

We  next define a product $\diamond$ on $\calh(X)$ by defining
$\frakx\diamond\frakx'$ for all $\frakx\in \tilde{X}^i, \frakx'\in \tilde{X}^j$ where $i, j\geq 1$. We achieve this by applying induction on the sum $n:=i+j\geq 2$. If $n=2$, then $\frakx,\frakx'$ are in $\tilde{X}$, and then we define $\frakx\diamond\frakx':=
\frakx\frakx'$, the concatenation of $\frakx$ and $\frakx'$.
Suppose $\frakx \diamond\frakx'$ have been defined for $\frakx$ and $\frakx'$ with $n\leq p$,
and consider $\frakx\in\tilde{X}^i$ and $\frakx'\in\tilde{X}^j$ with $n=p+1$.
Write $\frakx=\lc x_1\rc^{(k_1)}\cdots\lc x_{i}\rc^{(k_i)}$ and  $\frakx'=\lc x'_1\rc^{(t_1)}\cdots\lc x'_{j}\rc^{(t_j)}$.
Then we define
\begin{equation}
\frakx\diamond\frakx':=
\left\{\begin{array}{llll}
\lc x_1\rc^{(k_1)}\lc x'_1\rc^{(t_1)}\cdots\lc x'_{j}\rc^{(t_j)}, &\text{if}\,\, i=1,\\
\lc x_1\rc^{(\bar{k_1+1})}\left(\lc x_2\rc^{(k_2)} \cdots\lc x_i\rc^{(k_i)}\diamond \alpha_X(\lc x'_1\rc^{(t_1)}\cdots
\lc x'_{j}\rc^{(t_j)})\right),&\text{if}\,\, i\geq 2 .
\end{array}\right.
\mlabel{eq:prod}
\end{equation}
Here the product of $\frakx$ and $\frakx'$ in the first case is by concatenation. Since $\lc x_2\rc^{(k_2)}\cdots\lc x_i\rc^{(k_i)}$ is in $\tilde{X}^{i-1}$ and $\alpha_X(\lc x'_1\rc^{(t_1)}\cdots
\lc x'_{j}\rc^{(t_j)})$ is in $\tilde{X}^j$, the sum $i+j-1$ is $p$. Then by the induction hypothesis, the second case is well-defined. As a consequence, we obtain the following alternative description of the product.
\begin{equation}
\frakx\diamond\frakx'=\lc x_1\rc^{(\bar{k_1+1})}\lc x_2\rc^{(\bar{k_2+1})} \cdots\lc x_{i-1}\rc^{(\bar{k_{i-1}+1})}\lc x_i\rc^{(k_i)}\alpha_X^{i-1}(
\frakx').
\mlabel{eq:prodex}
\end{equation}
\begin{lemma}
Let $\frakx\in \tilde{X}^i$ with $i>1$ and let $\frakx=\frakx_1\frakx_2$ where $\frakx_p\in \tilde{X}^{i_p}$ with $i_p>0$ for $p=1,2$. Then for $\frakx'\in \tilde{X}^j$ we have
\begin{equation}
\frakx\diamond \frakx' = \alpha_X(\frakx_1)(\frakx_2\diamond \alpha_X^{i_1}(\frakx')).
\mlabel{eq:prodrec}
\end{equation}
\mlabel{lem:prodrec}
\end{lemma}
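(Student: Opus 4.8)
The plan is to obtain \eqref{eq:prodrec} by expanding both sides into explicit words of $\calh(X)$ and comparing them letter by letter, using the closed form \eqref{eq:prodex} of the product $\diamond$. Write $\frakx=\lc x_1\rc^{(k_1)}\cdots\lc x_i\rc^{(k_i)}$, so that the factorization $\frakx=\frakx_1\frakx_2$ forces $\frakx_1=\lc x_1\rc^{(k_1)}\cdots\lc x_{i_1}\rc^{(k_{i_1})}$ and $\frakx_2=\lc x_{i_1+1}\rc^{(k_{i_1+1})}\cdots\lc x_i\rc^{(k_i)}$, with $i_2=i-i_1\geq 1$ and $i>1$.

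First I would expand the left-hand side. By \eqref{eq:prodex},
$$\frakx\diamond\frakx'=\lc x_1\rc^{(\bar{k_1+1})}\cdots\lc x_{i-1}\rc^{(\bar{k_{i-1}+1})}\lc x_i\rc^{(k_i)}\alpha_X^{i-1}(\frakx').$$
Next I would expand the right-hand side. By \eqref{eq:br1} and \eqref{eq:alphx} we have $\alpha_X(\frakx_1)=\lc x_1\rc^{(\bar{k_1+1})}\cdots\lc x_{i_1}\rc^{(\bar{k_{i_1}+1})}$. Applying \eqref{eq:prodex} to the product of $\frakx_2\in\tilde{X}^{i_2}$ with the word $\alpha_X^{i_1}(\frakx')\in\tilde{X}^{j}$, and using the additivity $\alpha_X^{i_2-1}\circ\alpha_X^{i_1}=\alpha_X^{i-1}$, gives
$$\frakx_2\diamond\alpha_X^{i_1}(\frakx')=\lc x_{i_1+1}\rc^{(\bar{k_{i_1+1}+1})}\cdots\lc x_{i-1}\rc^{(\bar{k_{i-1}+1})}\lc x_i\rc^{(k_i)}\alpha_X^{i-1}(\frakx').$$
Concatenating $\alpha_X(\frakx_1)$ on the left of this word and using associativity of concatenation in $\calh(X)$ then reproduces exactly the above expansion of $\frakx\diamond\frakx'$, which completes the proof.

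Alternatively, one may bypass \eqref{eq:prodex} and argue by induction on $i_1$. The base case $i_1=1$ is, after rewriting $\lc x_1\rc^{(\bar{k_1+1})}=\alpha_X(\lc x_1\rc^{(k_1)})$ via \eqref{eq:alphx}, precisely the second clause of the recursive definition \eqref{eq:prod}. For the inductive step with $i_1\geq 2$, I would split off the first letter of $\frakx_1$, apply \eqref{eq:prod} once to pull it out as $\alpha_X(\lc x_1\rc^{(k_1)})$, invoke the inductive hypothesis on the remaining shorter word with $\frakx'$ replaced by $\alpha_X(\frakx')$, and reassemble using the multiplicativity \eqref{eq:br1} of $\alpha_X$ together with $\alpha_X^{i_1-1}\circ\alpha_X=\alpha_X^{i_1}$.

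The computation is routine bookkeeping; the only points that need care are the degenerate case $i_2=1$, in which $\frakx_2\diamond\alpha_X^{i_1}(\frakx')$ is a bare concatenation (the first clause of \eqref{eq:prod}) and the middle block of letters in the second display is empty, and keeping track of the powers of $\alpha_X$ so that they total $\alpha_X^{i-1}$; for the latter one only needs the additivity $\alpha_X^{a}\circ\alpha_X^{b}=\alpha_X^{a+b}$, with \eqref{eq:bramap} entering just to record that these powers depend only on the exponent modulo $2$. There is no genuine obstacle beyond this — the lemma is essentially a restatement of the recursion \eqref{eq:prod}.
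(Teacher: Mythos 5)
Your argument is correct. Your primary route — expanding both sides into explicit words via the closed form \eqref{eq:prodex} and matching them letter by letter — is genuinely different from the paper's proof, which instead argues by induction on $i$ directly from the recursive definition \eqref{eq:prod}: the base case and the case $i_1=1$ are read off from \eqref{eq:prod} together with \eqref{eq:alphx}, and for $i_1>1$ the first letter of $\frakx_1$ is peeled off, the induction hypothesis is applied to $\hat{\frakx}_1\frakx_2$ against $\alpha_X(\frakx')$, and the pieces are reassembled using the multiplicativity \eqref{eq:br1} of $\alpha_X$ on concatenations. Your ``alternative'' induction on $i_1$ is essentially this paper proof in disguise (the paper's induction on $i$ only ever decreases $i_1$). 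The expansion route is shorter and makes the bookkeeping of exponents and powers of $\alpha_X$ completely transparent, including the degenerate case $i_2=1$ that you correctly flag; its only cost is that it leans on \eqref{eq:prodex}, which the paper states as an unproved ``consequence'' of \eqref{eq:prod} and which itself requires the same kind of unfolding induction — so if one insists on full rigor from the recursion alone, the inductive route (yours or the paper's) is the self-contained one. Your identity $\alpha_X^{i_2-1}\circ\alpha_X^{i_1}=\alpha_X^{i-1}$ is exactly right, and you are also right that \eqref{eq:bramap} is not actually needed here since both sides carry the literal power $\alpha_X^{i-1}$.
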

\begin{proof} Let $\frakx=\frakx_1\frakx_2\in\tilde{X}^i$ as in the lemma. Then we have $i_1+i_2=i$.
We use induction on $i\geq 2$.  For $i=2$, we have $i_1=i_2=1$. Then $\frakx_1,\frakx_2$ are in $\tilde{X}$. By Eqs.~(\mref{eq:alphx}) and~(\mref{eq:prod}), we obtain
$$\frakx\diamond\frakx'=(\frakx_1\frakx_2)
\diamond\frakx'=\alpha_X(\frakx_1)(\frakx_2
\diamond \alpha_X(\frakx')).$$
Then Eq.~(\mref{eq:prodrec}) holds. Suppose Eq.~(\mref{eq:prodrec}) has been proved for $i\geq 2$ and consider  $\frakx\in\tilde{X}^{i+1}$. Then there are two cases.
\smallskip

\noindent{\bf Case 1. $i_1=1$:} Then by Eqs.~(\mref{eq:alphx}) and~(\mref{eq:prod}) again, we have
$$\frakx\diamond\frakx'=(\frakx_1\frakx_2)
\diamond\frakx'=\alpha_X(\frakx_1)(\frakx_2
\diamond\alpha_X(\frakx')).$$
Thus Eq.~(\mref{eq:prodrec}) holds.
\smallskip

\noindent{\bf Case 2. $i_1>1$:} Then we write $\frakx_1=\lc x\rc^{(k)}\hat{\frakx}_1$, where $\lc x\rc^{(k)}\in \tilde{X}$ and $\hat{\frakx}_1\in \tilde{X}^{i_1-1}$.
Then we have
\begin{eqnarray*}
\frakx\diamond\frakx'&=&
(\lc x\rc^{(k)}\hat{\frakx}_1\frakx_2)
\diamond\frakx'\\
&=&\lc x\rc^{(\bar{k+1})}(\hat{\frakx}_1\frakx_2
\diamond\alpha_X(\frakx'))\quad\text{(by Eq.~(\mref{eq:prod})})\\
&=&\alpha_X(\lc x\rc^{(k)})\Big(\hat{\frakx}_1\frakx_2
\diamond\alpha_X(\frakx')\Big)
\quad\text{(by Eq.~(\mref{eq:alphx})})\\
&=&\alpha_X(\lc x\rc^{(k)})\left(\alpha_X(\hat{\frakx}_1)
\big(\frakx_2\diamond\alpha_X^{i_1-1}
(\alpha_X(\frakx'))\big)\right)\quad\text{(by the induction hypothesis)}\\
&=&\alpha_X(\lc x\rc ^{(k)})\alpha_X(
\hat{\frakx}_1)\left(\frakx_2\diamond
\alpha_X^{i_1}(\frakx')\right)\\
&=&\alpha_X(\lc x\rc^{(k)}\hat{\frakx}_1)\left(\frakx_2\diamond
\alpha_X^{i_1}(\frakx')\right)
\quad\text{(by Eq.~(\mref{eq:br1}))}\\
&=&\alpha_X(\frakx_1)(\frakx_2\diamond
\alpha_X^{i_1}(\frakx')).
\end{eqnarray*}
This completes the inductive proof of Eq.~(\mref{eq:prodrec}).
\end{proof}
By Eq.~(\mref{eq:prod}), we have
\begin{equation}
\frakx \diamond \frakx'=\frakx\frakx'\quad
\text{for all } \frakx\in\tilde{X},\,
\frakx'\in\tilde{X}^n.
\mlabel{eq:diam}
\end{equation}
Further by Eq.~(\mref{eq:br1}),  we get
$$\alpha_X(\frakx\diamond \frakx')=\alpha_X(\frakx\frakx')=\alpha_X(\frakx )\alpha_X(\frakx') \quad \text{ for all } \frakx\in\tilde{X}, \frakx'\in\tilde{X}^n.$$
Thus
\begin{equation}
\alpha_X(\frakx \diamond \frakx')=\alpha_X(\frakx) \diamond \alpha_X( \frakx') \quad\text{ for all } \frakx\in \tilde{X}, \frakx'\in \tilde{X}^n.
\mlabel{eq:combra}
\end{equation}

We now state our main result on the free involutive Hom-semigroup constructed by bracketed words. It will be proved in the next subsection.
\begin{theorem} Let $X$ be a set. Let $j_X: X\to \calh(X)$ be the inclusion map.
\begin{enumerate}
\item
The triple $(\calh(X),\diamond,\alpha_X)$ is an involutive Hom-semigroup.
\mlabel{it:hombra}
\item
The quadruple $(\calh(X),\diamond,\alpha_X, j_X)$ is the free involutive Hom-semigroup on $X$.
\mlabel{it:freebra}
\end{enumerate}
\mlabel{thm:freeb}
\end{theorem}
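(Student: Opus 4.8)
The plan is to prove part (1) first, establishing that $(\calh(X),\diamond,\alpha_X)$ is an involutive Hom-semigroup, and then use it together with the universal property to prove part (2). For part (1), three things must be checked: that $\diamond$ is Hom-associative, that $\alpha_X$ is multiplicative with respect to $\diamond$, and that $\alpha_X^2 = \id$. The last is already recorded as Eq.~(\mref{eq:bramap}). For multiplicativity, I would show $\alpha_X(\frakx\diamond\frakx') = \alpha_X(\frakx)\diamond\alpha_X(\frakx')$ for all $\frakx\in\tilde X^i$, $\frakx'\in\tilde X^j$ by induction on $i$: the base case $i=1$ is Eq.~(\mref{eq:combra}), and for $i\geq 2$ I would write $\frakx = \frakx_1\frakx_2$ with $\frakx_1\in\tilde X$, apply Lemma~\mref{lem:prodrec} (Eq.~(\mref{eq:prodrec})), push $\alpha_X$ through the concatenation using Eq.~(\mref{eq:br1}), use $\alpha_X^{i_1} = \alpha_X$ commutes appropriately with the bracketed product via the inductive hypothesis on $\frakx_2$ (which lies in $\tilde X^{i-1}$), and reassemble. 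Throughout, the closed form Eq.~(\mref{eq:prodex}) is a convenient bookkeeping device: it expresses $\frakx\diamond\frakx'$ explicitly as a concatenation of the ``shifted'' prefix of $\frakx$, the last letter of $\frakx$ unchanged, and $\alpha_X^{i-1}(\frakx')$, and I would use it to reduce the multiplicativity and associativity checks to identities about the parity exponents $\bar{k+1}$ and about iterates of $\alpha_X$.

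The technical heart of part (1) is Hom-associativity: $\alpha_X(\frakx)\diamond(\fraky\diamond\frakz) = (\frakx\diamond\fraky)\diamond\alpha_X(\frakz)$ for $\frakx\in\tilde X^i$, $\fraky\in\tilde X^j$, $\frakz\in\tilde X^\ell$. I expect this to be the main obstacle. The natural approach is induction on $i$. When $i=1$, the left side is a plain concatenation $\alpha_X(\frakx)(\fraky\diamond\frakz)$ by Eq.~(\mref{eq:diam}), while the right side, using Eq.~(\mref{eq:prodrec}) with the splitting $\frakx\diamond\fraky = \frakx\fraky$ (here $\frakx\fraky\in\tilde X^{j+1}$ has $\frakx$ as its length-one first block), becomes $\alpha_X(\frakx)\big(\fraky\diamond\alpha_X(\alpha_X(\frakz))\big) = \alpha_X(\frakx)(\fraky\diamond\frakz)$ since $\alpha_X^2=\id$; so the two sides agree. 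For the inductive step $i\geq 2$, write $\frakx = \frakx_1\hat\frakx$ with $\frakx_1\in\tilde X$, $\hat\frakx\in\tilde X^{i-1}$. Applying Eq.~(\mref{eq:prodrec}) to both sides (noting $\alpha_X(\frakx) = \alpha_X(\frakx_1)\alpha_X(\hat\frakx)$ by Eq.~(\mref{eq:br1}), so $\alpha_X(\frakx)$ has first block $\alpha_X(\frakx_1)$ of length one), the identity reduces to
\begin{equation*}
\alpha_X(\hat\frakx)\diamond\big(\alpha_X(\fraky\diamond\frakz)\big) \overset{?}{=} \big(\hat\frakx\diamond\alpha_X(\fraky)\big)\diamond\alpha_X(\frakz)
\end{equation*}
after cancelling the common prefix $\alpha_X(\frakx_1)$ and simplifying with $\alpha_X^2=\id$; here I also need the multiplicativity already proved, to rewrite $\alpha_X(\fraky\diamond\frakz)$ as $\alpha_X(\fraky)\diamond\alpha_X(\frakz)$, and then the inductive hypothesis applied to $\hat\frakx\in\tilde X^{i-1}$ with the triple $(\hat\frakx,\alpha_X(\fraky),\alpha_X(\frakz))$ together with $\alpha_X(\alpha_X(\fraky)) = \fraky$ closes the loop. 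The bookkeeping of which iterate $\alpha_X^{?}$ lands where, and keeping the block-length parameters $i_1$ consistent, is the delicate part; I would manage it by working entirely through Eqs.~(\mref{eq:prodex}) and~(\mref{eq:prodrec}) rather than unwinding the recursive definition by hand.

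For part (2), let $(S,\cdot,\beta)$ be an involutive Hom-semigroup and $f: X\to S$ a set map. Define $\bar f: \calh(X)\to S$ on generators by $\bar f(\lc x\rc^{(k)}) := \beta^k(f(x))$ and extend to $\tilde X^n$ by $\bar f(\lc x_1\rc^{(k_1)}\cdots\lc x_n\rc^{(k_n)}) := \beta^{k_1}(f(x_1))\cdot\big(\beta^{k_2}(f(x_2))\cdot(\cdots\cdot\beta^{k_n}(f(x_n)))\big)$, i.e. right-normed bracketing — but one must first check this is well-defined as the "obvious" value, or simply define it this way and prove it is a morphism. Then $\bar f\circ j_X = f$ is immediate since $j_X(x) = x = \lc x\rc^{(0)}$. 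Compatibility with $\alpha_X$, i.e. $\bar f(\alpha_X(\frakx)) = \beta(\bar f(\frakx))$, follows from $\beta\circ\beta^{k_m} = \beta^{\overline{k_m+1}}$ (using $\beta^2=\id$) together with multiplicativity of $\beta$ on $S$, distributed over the right-normed product via the Hom-associative law in $S$. Compatibility with the product, $\bar f(\frakx\diamond\frakx') = \bar f(\frakx)\cdot\bar f(\frakx')$, is proved by induction on $i = \ell(\frakx)$ using Eq.~(\mref{eq:prod}) directly: for $i=1$ it is the definition of the right-normed product in $S$, and for $i\geq 2$ one applies Eq.~(\mref{eq:prod}), the inductive hypothesis, the already-established $\alpha_X$-compatibility of $\bar f$, the Hom-associative law in $S$, and $\beta^2=\id$. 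Finally, uniqueness: any Hom-semigroup morphism $g: \calh(X)\to S$ with $g\circ j_X = f$ must satisfy $g(\lc x\rc^{(k)}) = g(\alpha_X^k(x)) = \beta^k(g(x)) = \beta^k(f(x))$ and, since every element of $\calh(X)$ is a $\diamond$-product of elements of $\tilde X$ (by Eq.~(\mref{eq:diam}), $\lc x_1\rc^{(k_1)}\cdots\lc x_n\rc^{(k_n)} = \lc x_1\rc^{(k_1)}\diamond(\lc x_2\rc^{(k_2)}\diamond\cdots)$), $g$ is forced to equal $\bar f$ everywhere.
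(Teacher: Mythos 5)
Your proposal is correct and follows essentially the same route as the paper: the same recursion (Lemma~\mref{lem:prodrec}) drives inductions for multiplicativity, Hom-associativity, and the morphism property of the right-normed $\bar f$, with the same case split on whether the first word has length one. One bookkeeping slip: after stripping the common first letter in the Hom-associativity step, the right-hand side of your reduced identity should end in $\diamond\,\frakz$ rather than $\diamond\,\alpha_X(\frakz)$ (the $\alpha_X^2=\id$ cancellation lands on $\frakz$, not on $\fraky$), and the chain you describe --- multiplicativity, then the inductive hypothesis on the triple $(\hat\frakx,\alpha_X(\fraky),\alpha_X(\frakz))$ --- does produce exactly that corrected identity.
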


\subsection{The proof of Theorem~\mref{thm:freeb}}
\mlabel{ssec:proof}
We now prove Theorem~\mref{thm:freeb}.
\subsubsection{The proof of Theorem~\mref{thm:freeb}
(\mref{it:hombra})}
By Eq.~(\mref{eq:bramap}), we have $\alpha_X^2=\id$. We next prove
\begin{equation}
\alpha_X( \frakx \diamond\frakx')
=\alpha_X(\frakx)\diamond\alpha_X(\frakx')
\quad\text {for all } \frakx\in\tilde{X}^i,
\frakx'\in\tilde{X}^j.
\mlabel{eq:comdia}
\end{equation}
For this we apply induction on the sum $i+j\geq 2$. When $i+j=2$, we have $\frakx,\frakx'\in\tilde{X}$. By Eq.~(\mref{eq:combra}), we have
$\alpha_X(\frakx\diamond\frakx')=\alpha_X( \frakx)\diamond\alpha_X( \frakx')$.
Assume that Eq.~(\mref{eq:comdia}) holds for $i+j\leq p$ with $p\geq 2$. Consider $\frakx\in \tilde{X}^i$ and $\frakx'\in \tilde{X}^j$ with $i+j=p+1$. If $i=1$, then $\frakx\in \tilde{X}$. By Eq.~(\mref{eq:combra}) again, Eq.~(\mref{eq:comdia}) holds.
If $i\geq 2$, then we can write $\frakx=\lc x_1\rc^{(k_1)}\hat{\frakx}$, where $\lc x_1\rc^{(k_1)}\in \tilde{X}$ and $\hat{\frakx}\in \tilde{X}^{i-1}$.
By Lemma~\mref{lem:prodrec}, we get
\begin{equation}
\frakx\diamond \frakx'=\alpha_X(\lc x_1\rc^{(k_1)})
(\hat{\frakx}\diamond
\alpha_X (\frakx')).
\mlabel{eq:frake}
\end{equation}
Then we have
\begin{eqnarray*}
\alpha_X(\frakx\diamond\frakx')&=&
\alpha_X\left(\alpha_X(\lc x_1\rc^{(k_1)})\left(
\hat{\frakx}\diamond\alpha_X(\frakx')\right)\right)
\quad \text{(by Eq.~(\mref{eq:frake}))}\\
&=&
\alpha_X\left(\alpha_X(\lc x_1\rc^{(k_1)})\right)\alpha_X\left(
\hat{\frakx}\diamond\alpha_X(\frakx')\right)
\quad \text{(by Eq.~(\mref{eq:br1}))}\\
&=&\lc x_1\rc^{(k_1)}\alpha_X(\hat{\frakx}\diamond \alpha_X( \frakx'))
\quad \text{(by Eq.~(\mref{eq:bramap}))} \\
&=&\lc x_1\rc^{(k_1)}(\alpha_X(\hat{\frakx}
) \diamond  \frakx')
\quad \text{(by the induction hypothesis and Eq.~(\mref{eq:bramap}))}\\
&=&\left(\alpha_X( \lc x_1\rc^{(k_1)})\alpha_X(\hat{\frakx})\right)
\diamond\alpha_X(\frakx')\quad \text{(by Eqs.~(\mref{eq:bramap}) and ~(\mref{eq:prodrec}))}\\
&=&\alpha_X(\lc x_1\rc^{(k_1)}
\hat{\frakx})
\diamond \alpha_X(\frakx')\quad \text{(by Eq.~(\mref{eq:br1}))}\\
&=&\alpha_X(\frakx)\diamond\alpha_X(\frakx').
\end{eqnarray*}
This completes the inductive proof of Eq.~(\mref{eq:comdia}).

We next verify the Hom-associative law, that is,
\begin{equation}
\alpha_X(\frakx ) \diamond (\frakx'\diamond \frakx'')=(\frakx \diamond \frakx')\diamond \alpha_X( \frakx''),\quad \forall \frakx\in \tilde{X}^i,\frakx'\in \tilde{X}^j,\frakx''\in \tilde{X}^{\ell}, i, j, \ell\geq 1.
\mlabel{eq:homdia}
\end{equation}
We prove Eq.~(\mref{eq:homdia}) by induction on the sum $i+j+\ell\geq 3$. For $i+j+\ell=3$, we have $\frakx,\frakx',\frakx''\in \tilde{X}$.
By Eq.~(\mref{eq:diam}), we have
$\alpha_X(\frakx) \diamond (\frakx'\diamond \frakx'')=\alpha_X(\frakx)\frakx'\frakx''.$
By Eqs.~(\mref{eq:bramap}) and ~(\mref{eq:prodrec}), we have
\begin{equation}
(\frakx\diamond \frakx')\diamond \alpha_X(\frakx'')=(\frakx\frakx')\diamond
\alpha_X(\frakx'')
=\alpha_X(\frakx)(\frakx'\diamond \alpha_X^2(
\frakx''))=\alpha_X(\frakx)\frakx'\frakx''.
\end{equation}
Thus Eq.~(\mref{eq:homdia}) holds.
Assume that Eq.~(\mref{eq:homdia}) has been proved for $i+j+\ell\leq p$ with $p\geq 3$. Consider $\frakx\in\tilde{X}^i$, $\frakx'\in\tilde{X}^j$ and $\frakx''\in \tilde{X}^{\ell}$ with $i+j+\ell=p+1$.
We distinguish two cases, depending on whether or not $i=1$.
\smallskip

\noindent
{\bf Case 1. $i=1$:} Then $\frakx$ is in $\tilde{X}$. So $\alpha_X(\frakx)$ is also in $\tilde{X}$. Then by Eqs.~(\mref{eq:bramap}) and ~(\mref{eq:prodrec}) again, we have
$$
\alpha_X(\frakx) \diamond (\frakx'\diamond \frakx'')=\alpha_X(\frakx)(\frakx'\diamond\frakx'')\quad
\text{and}\quad(\frakx\diamond\frakx')
\diamond\alpha_X(\frakx'')
=(\frakx\frakx')\diamond \alpha_X(\frakx'')=\alpha_X(\frakx)(\frakx'\diamond
\frakx'').
$$
Thus Eq.~(\mref{eq:homdia}) holds.
\smallskip

\noindent
{\bf Case 2. $i\geq 2$:} Then we assume $\frakx=\lc x\rc^{(k)}\hat{\frakx}$, where $\lc x\rc^{(k)}\in \tilde{X}$ and
$\hat{\frakx}\in \tilde{X}^{i-1}$. By the induction hypothesis, we have
\begin{eqnarray*}
\alpha_X( \frakx)\diamond (\frakx'\diamond \frakx'')
&=&\left(\alpha_X(\lc x\rc ^{(k)})\alpha_X(
\hat{\frakx})\right)\diamond(\frakx'\diamond
\frakx'')\quad \text{(by Eq.~(\mref{eq:br1}))}\\
&=&\lc x\rc^{(k)}\left(\alpha_X(\hat{\frakx})\diamond\alpha_X( \frakx'\diamond\frakx'')\right)\quad \text{(by Eqs.~(\mref{eq:bramap}) and ~(\mref{eq:prodrec})) }\\
&=&\lc x\rc^{(k)}\left(\alpha_X(\hat{\frakx})
\diamond(\alpha_X(\frakx')\diamond\alpha_X(
\frakx''))\right)\quad
\text{(by Eq.~(\mref{eq:comdia})) }\\
&=&\lc x\rc^{(k)}\left((\hat{\frakx}
\diamond\alpha_X( \frakx'))\diamond
\frakx''\right)\quad\text{(by the induction hypothesis and  Eq.~(\mref{eq:bramap})) }\\
&=&\left(\alpha_X(\lc x\rc^{(k)})\left(\hat{\frakx}\diamond\alpha_X(
\frakx')\right)\right)\diamond\alpha_X(\frakx'')
\quad
\text{(by Eqs.~(\mref{eq:bramap}) and ~(\mref{eq:prodrec})) }\\
&=&\left(\big(\lc x\rc^{(k)}\hat{\frakx}\big)\diamond \frakx'\right)\diamond\alpha_X( \frakx'')
\quad
\text{(by Eqs.~(\mref{eq:bramap}) and ~(\mref{eq:prodrec})) }\\
&=&(\frakx\diamond\frakx')\diamond\alpha_X( \frakx'').
\end{eqnarray*}
This completes the inductive proof of Eq.~(\mref{eq:homdia}). Thus
$(\calh(X),\diamond,\alpha_X)$ is an involutive Hom-semigroup.

\subsubsection{The proof of Theorem~\mref{thm:freeb}
(\mref{it:freebra})}
We now prove Theorem~\mref{thm:freeb}
(\mref{it:freebra}). Let $(S,\cdot,\beta)$ be an involutive Hom-semigroup. Let $f:X\rightarrow S$ be a set map. We will construct a map $\bar{f}:\calh(X)\rightarrow S$ by defining $\bar{f}(\frakx)$ for all $\frakx\in \calh(X)$. We achieve this by defining $\bar{f}(\frakx)$ for $\frakx\in \tilde{X}^n$ by induction on $n\geq 1$.
For $\frakx\in\tilde{X}$, we have $\frakx=\lc x\rc^{(k)}$, where $x\in X$ and $k\in\{0,1\}$. Then we define
\begin{equation}
\bar{f}(\frakx)=\beta^k(f(x)).
\mlabel{eq:base}
\end{equation}
Suppose $\bar{f}(\frakx)$ have been defined for $\frakx\in\tilde{X}^n$ with $n\geq 1$ and consider $\frakx\in\tilde{X}^{n+1}$. Then we can write $\frakx=\lc x\rc^{(k)}\hat{\frakx}$, where $\lc x\rc^{(k)}\in \tilde{X}$ and $\hat{\frakx}\in\tilde{X}^n$.
Then define
\begin{equation}
\bar{f}(\frakx)=\bar{f}(\lc x\rc^{(k)})\cdot\bar{f}(\hat{\frakx}),
\mlabel{eq:defbar}
\end{equation}
which is well-defined by the induction hypothesis. It remains to prove that the map $\bar{f}$ defined above is indeed a homomorphism of Hom-semigroups. First we prove that $\bar{f}$ satisfies
\begin{equation}
\bar{f}(\alpha_X(\frakx))=\beta(\bar{f}(\frakx))\quad \text{ for all } \frakx\in \tilde{X}^n.
\mlabel{eq:barfbet}
\end{equation}
We prove Eq.~(\mref{eq:barfbet}) by induction on $n\geq 1$. For $n=1$, $\frakx$ is in $\tilde{X}$ and hence is of the form $\frakx=\lc x\rc^{(k)}$. By the definition of $\bar{f}$, we have
$$\bar{f}(\alpha_X(\frakx))=\bar{f}(\lc x \rc^{(\bar{k+1})})=\beta^{\bar{k+1}}
(f(x))=\beta(\beta^k(f(x)))
=\beta(\bar{f}(\frakx)).$$
Assume Eq.~(\mref{eq:barfbet}) has been proved for $n\leq p$ with $p\geq 1$. Let $\frakx=\lc x\rc^{(k)}\hat{\frakx}\in \tilde{X}^{p+1}$, where $\lc x\rc^{(k)}\in\tilde{X}$ and $\hat{\frakx}\in\tilde{X}^p$. By the induction hypothesis, together with Eqs.~(\mref{eq:alphx}),~(\mref{eq:br1}),~(\mref{eq:base}) and~(\mref{eq:defbar}), we have
\begin{eqnarray*}
\bar{f}(\alpha_X(\frakx))&=&\bar{f}(\lc x\rc^{(\bar{k+1})}\alpha_X( \hat{\frakx}))\quad 
\\
&=&\bar{f}(\lc x\rc^{(\bar{k+1})})\cdot\bar{f}(
\alpha_X(\hat{\frakx})) 
\\
&=&\beta^{\bar{k+1}}(f(x))\cdot
\beta({\bar{f}
(\hat{\frakx})}) 
\\
&=&\beta\left(\beta^k(f(x))\right)\cdot \beta(\bar{f}(\hat{\frakx}))\\
&=&\beta\left(\beta^k(f(x))\cdot \bar{f}(\hat{\frakx})\right)\\
&=&\beta\left(\bar{f}(\lc x\rc^{(k)})\cdot\bar{f}(\hat{\frakx})
\right)\\
&=&\beta(\bar{f}(\frakx)),
\end{eqnarray*}
completing the proof of Eq.~(\mref{eq:barfbet}).

We finally need to verify
\begin{equation}
\bar{f}(\frakx\diamond\frakx')=
\bar{f}(\frakx)\cdot \bar{f}(\frakx')
\quad\text{ for all } \frakx\in\tilde{X}^i, \frakx'\in\tilde{X}^j.
\mlabel{eq:barfdia}
\end{equation}
We use induction on the sum $i+j\geq 2$. When $i+j=2$, we have $\frakx,\frakx'\in \tilde{X}$. By the definition of $\bar{f}$, we have
$$\bar{f}(\frakx\diamond \frakx')=\bar{f}(\frakx\frakx')=\bar{f}
(\frakx)\cdot\bar{f}(\frakx').$$
Assume Eq.~(\mref{eq:barfdia}) holds for $i+j\leq p$ with $p\geq 2$. Let $\frakx\in\tilde{X}^i$ and $\frakx'\in \tilde{X}^j$ with $i+j=p+1$. Then we consider two cases.
\smallskip

\noindent
{\bf Case 1. $i=1$:} Then $\frakx$ is in $\tilde{X}$. By the definition of $\bar{f}$, we have
\begin{equation}
\bar{f}(\frakx\diamond\frakx')=\bar{f}
(\frakx\frakx')=\bar{f}(\frakx)\cdot
\bar{f}(\frakx').
\end{equation}
\smallskip
\noindent
{\bf Case 2. $i\geq 2$:} Then we have $\frakx=\lc x\rc^{(k)}\hat{\frakx}$, where $\lc x\rc^{(k)}\in \tilde{X}$ and $\hat{\frakx}\in \tilde{X}^{i-1}$. Then we have
\begin{eqnarray*}
\bar{f}(\frakx\diamond \frakx')&=&\bar{f}\left((\lc x\rc^{(k)}\hat{\frakx})\diamond\frakx'
\right)\\
&=&\bar{f}\left(\alpha_X(\lc x\rc^{(k)})(\hat{\frakx}\diamond
\alpha_X( \frakx'))\right)\quad \text{(by Eq.~(\mref{eq:prodrec}) )}\\
&=&\bar{f}\left(\alpha_X(\lc x\rc^{(k)})\right)\cdot
\bar{f}\Big(\hat{\frakx}
\diamond\alpha_X(\frakx')\Big)\quad \text{(by Eq.~(\mref{eq:defbar}) )}\\
&=&\bar{f}\left(\alpha_X(\lc x\rc^{(k)})\right)\cdot
\Big(\bar{f}(\hat{\frakx})\cdot\bar{f}(
\alpha_X(\frakx'))
\Big)\quad \text{(by the induction hypothesis )}\\
&=&\beta\Big(\bar{f}(\lc x\rc^{(k)})\Big)\cdot\Big(\bar{f}(\hat{\frakx})\cdot
\beta(\bar{f}(\frakx'))\Big)\quad \text{(by Eq.~(\mref{eq:barfbet}) )}\\
&=&(\bar{f}(\lc x\rc^{(k)})\cdot \bar{f}(\hat{\frakx}))\cdot\bar{f}(
\frakx')\quad \text{(by Hom-associativity and $\beta^2=\id$ )}\\
&=&\bar{f}(\lc x\rc^{(k)}\hat{\frakx})\cdot\bar{f}(
\frakx')\quad \text{(by Eq.~(\mref{eq:defbar}))}\\
&=&\bar{f}(\frakx)\cdot\bar{f}(\frakx').
\end{eqnarray*}
This completes the induction. The uniqueness of $\bar{f}$ follows from Eqs.~(\mref{eq:base}) and ~(\mref{eq:defbar}). Thus the proof of Theorem~\mref{thm:freeb}(\mref{it:freebra}) is now completed.

\section{Free involutive Hom-associative algebras on a set}
\mlabel{ssec:freeonset}

We will consider the construction of the free involutive Hom-associative algebra on a set.
Let $\bfk$ denote a commutative ring with identity.

\begin{defn}
\begin{enumerate}
\item
A {\bf Hom-associative algebra} is a triple $(A,\cdot,\alpha)$ consisting of a $\bfk$-module $A$, a $\bfk$-linear map $\cdot:A\ot A\rightarrow A$ and a multiplicative linear map  $\alpha:A\rightarrow A$ (namely $\alpha(x\cdot y)=\alpha(x)\cdot \alpha(y)$) satisfying the Hom-associativity
\begin{equation}
\alpha(x)(yz)=(xy)\alpha(z)\quad \forall x,y,z\in A.
\mlabel{eq:homass}
\end{equation}
\item
A Hom-associative algebra $(A,\cdot,\alpha)$ is called {\bf involutive} if $\alpha^2=\id$.
\item
Let $(A,\cdot,\alpha)$ and $(B,\ast,\beta)$ be two Hom-associative algebras. A $\bfk$-linear map $f:A\rightarrow B$ is a {\bf homomorphism of Hom-associative algebras} if
$$f(x\cdot y)=f(x)\ast f(y)\quad\text{and}\quad f(\alpha(x))=\beta(f(x)),\quad\forall\, x,y\in A.$$
\item
A {\bf free involutive Hom-associative algebra on a set $X$} is an involutive Hom-associative algebra $(F(X),\ast,\alpha_X)$ together with a map $j_X:X\rightarrow F(X)$ with the property that,
for any involutive Hom-associative algebra $(A,\cdot,\alpha)$ together with a map $f:X\rightarrow A$, there is a unique homomorphism $\bar{f}:F(X)\rightarrow A$ of Hom-associative algebras such that $f=\bar{f}\circ j_X$.
\delete{
In other words, the following diagram commutes.
\[\xymatrix{
X \ar[rr]^(0.5){j_X} \ar[drr]_{f}
    && F(X) \ar[d]^{\free{f}} \\
&& A } \]
}
\end{enumerate}
\end{defn}
Let $X$ be a given set. Let $(\calh(X),\diamond,\alpha_X)$ be the free involutive Hom-semigroup on the set $X$ obtained in Theorem~\mref{thm:freeb}. Let $\bfk\calh(X)$ be the free $\bfk$-module spanned by $\calh(X)$. We extend the binary operation $\diamond$ and the multiplicative map $\alpha_X$ to $\bfk \calh(X)$ by bilinearity and linearity respectively.
\begin{theorem}
Let $X$ be a given set. Let $j_X:X\rightarrow \bfk\calh(X)$ be the inclusion map.  Then
\begin{enumerate}
\item
The triple $(\bfk\calh(X),\diamond,\alpha_X)$ is an involutive Hom-associative algebra.
\mlabel{it:invx}
\item
The quadruple $(\bfk\calh(X),\diamond,\alpha_X,j_X)$ is the free involutive Hom-associative algebra on $X$.
\mlabel{it:fre}
\end{enumerate}
\mlabel{thm:freex}
\end{theorem}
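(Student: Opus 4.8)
The plan is to deduce Theorem~\mref{thm:freex} almost formally from Theorem~\mref{thm:freeb}, using the general principle that linearizing a free object in $\Sets$-flavored structures yields a free object in the corresponding $\bfk$-linear structures. First I would prove part~(\mref{it:invx}). The operation $\diamond$ on $\bfk\calh(X)$ is defined by bilinear extension of $\diamond$ on $\calh(X)$, and $\alpha_X$ is extended by linearity; since $\calh(X)$ is a $\bfk$-basis of $\bfk\calh(X)$, every element is a finite sum $\sum_i c_i \frakx_i$, and all the identities needed—namely $\alpha_X^2=\id$, multiplicativity $\alpha_X(u\diamond v)=\alpha_X(u)\diamond\alpha_X(v)$, and Hom-associativity $\alpha_X(u)\diamond(v\diamond w)=(u\diamond v)\diamond\alpha_X(w)$—are $\bfk$-multilinear in their arguments. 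Hence each holds on all of $\bfk\calh(X)$ as soon as it holds on basis elements, where it is exactly the content of Theorem~\mref{thm:freeb}(\mref{it:hombra}) together with Eq.~(\mref{eq:comdia}). So part~(\mref{it:invx}) is immediate.

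For part~(\mref{it:fre}), let $(A,\cdot,\alpha)$ be an involutive Hom-associative algebra and $f:X\to A$ a set map. The key observation is that the underlying set $(A,\cdot,\alpha)$ is in particular an involutive Hom-semigroup. By the universal property in Theorem~\mref{thm:freeb}(\mref{it:freebra}), there is a unique Hom-semigroup morphism $g:\calh(X)\to A$ with $g\circ j_X=f$. Extend $g$ by $\bfk$-linearity to $\bar{f}:\bfk\calh(X)\to A$; this is well-defined since $\calh(X)$ is a basis. I would then check $\bar{f}$ is a homomorphism of Hom-associative algebras: $\bfk$-linearity is built in; the identities $\bar{f}(u\diamond v)=\bar{f}(u)\cdot\bar{f}(v)$ and $\bar{f}(\alpha_X(u))=\alpha(\bar{f}(u))$ are multilinear (resp. linear) in $u,v$, so they reduce to the basis case, which is precisely that $g$ is a Hom-semigroup morphism. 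And $\bar{f}\circ j_X = g\circ j_X = f$.

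For uniqueness, suppose $h:\bfk\calh(X)\to A$ is another homomorphism of Hom-associative algebras with $h\circ j_X=f$. Then $h$ restricted to $\calh(X)$ is a Hom-semigroup morphism extending $f$ on $X$: indeed $h(\frakx\diamond\frakx')=h(\frakx)\cdot h(\frakx')$ and $h(\alpha_X(\frakx))=\alpha(h(\frakx))$ for basis elements, so by the uniqueness clause in Theorem~\mref{thm:freeb}(\mref{it:freebra}) we get $h|_{\calh(X)}=g$, and since both $h$ and $\bar{f}$ are $\bfk$-linear and agree on the basis $\calh(X)$, they are equal. This completes the proof.

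I do not anticipate a genuine obstacle here—the argument is a routine ``linear span of a free monoid-like object is free'' transfer. The only points requiring a word of care are (i) making sure $\diamond$ and $\alpha_X$ are indeed extended multilinearly/linearly so that the reduction-to-basis step is legitimate, and (ii) noting that one must invoke \emph{both} the existence and the uniqueness parts of Theorem~\mref{thm:freeb}(\mref{it:freebra})—existence to build $\bar{f}$, uniqueness to pin down $h|_{\calh(X)}$. No new estimates or constructions are needed beyond what is already in Section~\mref{sec:free}.
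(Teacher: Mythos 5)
Your proof is correct, and it establishes exactly what the theorem asserts; the only difference from the paper is one of packaging. For part~(\mref{it:fre}) the paper does not carry out the element-level verification you describe: after remarking that ``a direct proof following the one for Theorem~\mref{thm:freeb}(\mref{it:freebra})'' is possible, it instead argues categorically, observing that $S\mapsto \bfk S$ is left adjoint to the forgetful functor $\mathbf{InvHomAs}\to\mathbf{InvHomSg}$, that $X\mapsto\calh(X)$ is left adjoint to the forgetful functor $\mathbf{InvHomSg}\to\mathbf{Set}$, and then invoking the fact that a composite of left adjoints is left adjoint to the composite of the right adjoints. Your argument is precisely the concrete unwinding of that composition of adjunctions: you regard $(A,\cdot,\alpha)$ as an involutive Hom-semigroup, obtain $g:\calh(X)\to A$ from Theorem~\mref{thm:freeb}(\mref{it:freebra}), and linearize; existence and uniqueness of $\bar f$ then reduce, via $\bfk$-(multi)linearity on the basis $\calh(X)$, to the existence and uniqueness of $g$. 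This is cleaner than the ``direct proof'' the paper alludes to (which would redo the inductions of Section~\mref{ssec:proof} on $\bfk\calh(X)$), and it is more self-contained than the paper's version since it does not appeal to MacLane; what the categorical formulation buys in exchange is reusability of the argument for other pairs of forgetful functors. Your part~(\mref{it:invx}) matches the paper's (which simply cites Theorem~\mref{thm:freeb}(\mref{it:hombra}), leaving the multilinearity reduction implicit). I see no gap.
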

\begin{proof}
(\mref{it:invx}) By Theorem~\mref{thm:freeb}(\mref{it:hombra}),
Item (\mref{it:invx}) holds.
\smallskip

\noindent
(\mref{it:fre}) We can give a direct proof following the one for Theorem~\mref{thm:freeb}.(\mref{it:freebra}). Alternatively, note that taking the $\bfk$-module span of an involutive Hom-semigroup, that is, taking the free $\bfk$-module of an involutive Hom-semigroup, is the adjoint functor of the forgetful functor from the category $\mathbf{InvHomAs}$ of involutive Hom-associative algebras to the category $\mathbf{InvHomSg}$ of involutive Hom-semigroups, by forgetting the additive structure. On the other hand, the free construction in Theorem~\mref{thm:freeb}.(\mref{it:freebra}) provides the adjoint functor of the forgetful functor from $\mathbf{InvHomSg}$ to the category $\mathbf{Set}$ of sets.

Now the forgetful functor from the category $\mathbf{InvHomAs}$ to the category $\mathbf{Set}$ is the composition of the forgetful functor from $\mathbf{InvHomAs}$ to $\mathbf{InvHomSg}$ and the forgetful functor from $\mathbf{InvHomSg}$ to $\mathbf{Set}$. As is well-known (see for example~\cite[Theorem 1, p. 101]{Ma}), the adjoint functor of a composed functor is the composition of the adjoint functors. Therefore the functor that assigns $X\in \mathbf{Set}$ to $\bfk\calh(X)\in \mathbf{InvHomAs}$
is the adjoint functor of the forgetful functor from $\mathbf{InvHomAs}$ to $\mathbf{Set}$. This proves Item~(\mref{it:fre}).
\end{proof}

In a forthcoming paper~\cite{GZ}, we will continue the study of free Hom-associative algebras and apply it to investigate enveloping Hom-associative algebras~\cite{Yau4} of Hom-Lie algebra, in particular on the Poincar\'e-Birkhoff-Witt type theorem for the enveloping Hom-associative algebras.
\smallskip

\noindent
{\bf Acknowledgements}:
This work is supported by the National Natural Science Foundation of China (Grant No. 11371178) and the National Science Foundation of US (Grant No. DMS~1001855).

%

\end{document}